\renewcommand{\d}{\mathrm{d}} 
\newcommand{\enorm}[1]{\|#1\|_2} 
\newcommand{\eqqcolon}{=\mathrel{\mathop:}}
\newcommand{\coloneqq}{\mathrel{\mathop:}=} \newcommand{\spec}[1]{\mathrm{#1}}
\newcommand{\e}{\mathrm{e}} \newcommand{\f}{\mathrm{f}}
\title{A variational principle\\for computing slow invariant manifolds\\in
  dissipative dynamical systems\thanks{This work was supported by the German
    Research Foundation (DFG) through the Collaborative Research Center (SFB)
    568 and the Landesstiftung Baden-W\"urttemberg.}}
\author{Dirk Lebiedz\footnotemark[2] \footnotemark[3] \and Jochen
  Siehr\footnotemark[3] \and Jonas Unger\footnotemark[2]}
\begin{document}
\maketitle

\renewcommand{\thefootnote}{\fnsymbol{footnote}} \footnotetext[2]{Center for
  Systems Biology (ZBSA), University of Freiburg,
  Habsburgerstra\ss{}e 49, 79104 Freiburg, Germany
  (\texttt{dirk.lebiedz@biologie.uni-freiburg.de}).}
\footnotetext[3]{Interdisciplinary Center for Scientific Computing (IWR),
  University of Heidelberg, Im Neuenheimer Feld 368, 69120 Heidelberg,
  Germany.}  \renewcommand{\thefootnote}{\arabic{footnote}}

\begin{abstract}
  A key issue in dimension reduction of dissipative dynamical
  systems with spectral gaps is the identification of slow invariant
  manifolds. We present theoretical and numerical results for a
  variational approach to the problem of computing such manifolds for kinetic
  models using trajectory optimization. The corresponding objective functional
  reflects a variational principle that characterizes trajectories
  on, respectively near, slow invariant manifolds. For a
  two-dimensional linear system and a common nonlinear test
  problem we show analytically that the variational approach asymptotically
  identifies the exact slow invariant manifold in the limit of both an
  infinite time horizon of the variational problem with fixed spectral gap
  and infinite spectral gap with a fixed finite time
  horizon. Numerical results for the linear and nonlinear model problems as
  well as a more realistic higher-dimensional chemical reaction mechanism are
  presented.
\end{abstract}

\begin{keywords}
  Model reduction, slow invariant manifold, optimization, calculus of
  variations, extremum principle, curvature, chemical kinetics
\end{keywords}

\begin{AMS}
  37N40, 37M99, 80A30, 92E20
\end{AMS}

\pagestyle{myheadings} \thispagestyle{plain} \markboth{D.~LEBIEDZ, J.~SIEHR,
  AND J.~UNGER}{A VARIATIONAL PRINCIPLE FOR COMPUTING SLOW MANIFOLDS}

\section{Introduction} \label{s:intro}
In dissipative ordinary differential equation systems modeling chemical
reaction kinetics the phase flow generally causes anisotropic volume
contraction due to multiple time scales with spectral gaps. This leads to a
bundling of trajectories near ``invariant manifolds of slow motion'' of
successively lower dimension during time evolution. Model reduction methods
exploit this for simplifying the underlying ordinary differential equation
models via time scale separation into fast and slow modes and eliminating
the fast modes by enslaving them to the slow ones as a graph of a function
which defines the slow invariant (attracting) manifold (SIM).

Early model reduction approaches in chemical kinetics like the quasi
steady-state and partial equilibrium assumption \cite{Warnatz2006} have been
performed ``by hand'', modern numerical approaches are supposed to
automatically compute
a reduced model without need for detailed expert knowledge of chemical
kinetics by the user. Many of these techniques are based on an explicit
time-scale analysis of the underlying ordinary differential equation (ODE)
system.

Among those methods that became popular in applications are the intrinsic low
dimensional manifold (ILDM) method \cite{Maas1992} and recent extensions of
its main ideas, e.g.\ the global quasi-linearization (GQL) \cite{Bykov2008},
computational singular perturbation (CSP) \cite{Lam1985, Lam1994}, Fraser's
algorithm \cite{Davis1999, Fraser1988, Nguyen1989}, the method of
invariant grids \cite{Chiavazzo2009, Gorban2005, Gorban2005a}, the constrained
runs algorithm \cite{Gear2005, Zagaris2009}, rate-controlled constrained
equilibrium (RCCE) \cite{Keck1971}, the invariant constrained equilibrium edge
preimage curve (ICE-PIC) method \cite{Ren2005,Ren2006a}, flamelet-generated
manifolds \cite{Delhaye2007, Oijen2000}, and finite time Lyapunov exponents
\cite{Mease2003}. For a comprehensive overview see e.g.\ \cite{Gorban2005} and
references therein.

Reaction trajectories in phase space that are solutions of an ODE system
$\dot{x}(t)=f(x(t)), x(0)=x_0, f \in C^{\infty}$, describing chemical
kinetics are uniquely determined by their initial values and the corresponding
orbits bear global information about phase space structure. Based on
Lebiedz' idea to search for an extremum principle that distinguishes
trajectories on or near slow attracting manifolds, an optimization approach
for computing such trajectories has been applied in \cite{Lebiedz2004c,
  Lebiedz2006b, Reinhardt2008}. In \cite{Lebiedz2009} the authors propose and
discuss various geometrically motivated optimization criteria for the
formulation of a suitable extremum principle and present numerical results for
several applications.

The present work systematically analyzes a variational formulation
of the problem to compute slow invariant manifolds and its potential
for identifying the correct manifold for linear and nonlinear test problems in
two-dimensional phase space. We analytically prove the correct identification of
the slow eigenspace and SIM respectively in the limit of infinite-time horizon
of the variational problem and derive
an error quantification as a function of spectral gap and finite time
horizon length. In addition, we provide corresponding numerical results
confirming the theoretical prediction.

\section{Variational Problem} \label{s:optim_appr}
We consider autonomous ODE systems of the form $\dot{x}=f(x)$ modeling
chemical reaction kinetics that have a stable fixed point corresponding to
chemical equilibrium. The basic idea of our approach is the formulation
of a variational principle that captures essential properties of a slow
invariant manifold (SIM). We propose an appropriate
characterization of maximum ``slowness'' in terms of an integral over suitably
defined curvature (velocity change) of trajectories measured in the Euclidean
norm. The SIM is generally characterized by the property that all trajectories
in its neighborhood converge
faster to the manifold than to the attractor, the chemical equilibrium
point. Adrover et al.\ \cite{Adrover2007} recently argued that this might be
interpreted as a ratio $r>1$ of the local stretching (contraction)
rate of vectors orthogonal to the manifold compared to those tangent to the
manifold. This point of view comes close to our reasoning on the basis of a
variational principle.

\subsection{Trajectory-Based Optimization Approach} \label{ss:gen_appr}
The variational problem can be formulated as
\begin{subequations} \label{eq:op}
  \begin{equation} \label{eq:op:of} \min_{x(t)} \int_{t_{0}}^{t_{\f}}
    \Phi\left(x(t)\right) \; \d t
  \end{equation}
  \textrm{subject to}
  \begin{align}
    \frac{\d x(t)}{\d t} &= f\left(x(t)\right) \label{eq:op:dyn}\\
    0 &= g\left(x(t_{*})\right)  \label{eq:op:cr}\\
    x_j(t_{*}) &= x_j^{t_*},\quad j \in I_{\text{fixed}}, \label{eq:op:pv}
  \end{align}
\end{subequations}
with $t_0 \leqslant t_{*} \leqslant t_{\f}$. The variable $x=(x_i)_{i=1}^n$ denotes the
state vector and $I_{\mathrm{fixed}}$ is an index set that contains the
indices of state variables (denoted as \emph{reaction progress variables} in
chemical kinetics) with fixed values at fixed time $t_*$ chosen to
parameterize the
reduced model, i.e.\ the slow attracting manifold to be computed. Thus, those
state variables representing the actual degrees of freedom within the
optimization problem are $x_j(t_*), j\notin I_{\rm fixed}$. The process of
determining $x^{t_*}_j, j \notin I_{\rm fixed}$ from $x^{t_*}_j,j \in I_{\rm
  fixed}$ is known as \emph{species reconstruction} in chemical kinetics and
represents a function mapping the reaction
progress variables to the full species composition by determining a point on
the slow attracting manifold. The system dynamics (e.g.\ chemical kinetics
determined by the reaction mechanism) are described by (\ref{eq:op:dyn}) and
enter the optimization problem as equality constraints. Hence an optimal
solution of (\ref{eq:op}) always satisfies the system dynamics of the full ODE
system and therefore represents a solution trajectory of
(\ref{eq:op:dyn}). Additional constraints (e.g.\ chemical element mass
conservation relations in the case of chemical kinetics that have to
be obeyed due to the law of mass conservation) are collected in the function
$g$  in (\ref{eq:op:cr}). The state variables chosen as parameterization of
the reduced model (slow invariant manifold) are fixed via the equality
constraint (\ref{eq:op:pv}) at $t_*$. The
objective functional $\Phi(x(t))$ in (\ref{eq:op:of}) characterizes the
variational principle that will be discussed in the next section.

\subsection{Optimization Criterion} \label{ss:optim_crit}
In \cite{Lebiedz2009, Reinhardt2008}
\begin{equation*}
  \Phi(x) \coloneqq \enorm{J_f(x)\;f}
\end{equation*}
is proposed as a suitable criterion with $J_f(x)$ being the Jacobian of
the right hand side $f$ evaluated at $x(t)$ and $\|\cdot\|_2$ denoting the
Euclidean norm.

The term $J_f(x)\;f$ represents the rate of change of reaction velocity
in its own direction along a trajectory and can be interpreted as a specific
definition of curvature in time parameterization of the curve
\begin{equation*}
  \ddot{x} = \frac{\d \dot{x}}{\d t}
  = \frac{\d\dot{x}}{\d x} \cdot \frac{\d x}{\d t} = J_f(x) \cdot f .
\end{equation*}
The minimization of the time integral over $\Phi$ in (\ref{eq:op:of})
incorporates the ``maximum slowness'' issue in terms of an average over
suitably measured local curvature of a trajectory.

For further analytical and numerical investigation of the variational
formulation we consider a slight modification of the objective
functional
\begin{equation} \label{eq:objA2} \int_{t_0}^{t_{\f}}\enorm{J_f(x)\;f}^2 \; \d
  t = \int_{t_0}^{t_{\f}}f^{\rm T} J_f(x)^{\rm T} J_f(x) f \; \d t .
\end{equation}

\subsection{Forward and Reverse Mode} \label{ss:forw_rev} In previous
publications \cite{Lebiedz2004c, Lebiedz2006b, Lebiedz2009} the general
optimization problem (\ref{eq:op}) is formulated with $t_*=t_0=0$ and for the
numerical computations $t_\f$ is chosen ``large enough'' for $x(t_\f)$ to
be close to the attractor, the chemical equilibrium point.
The numerical value $t_0=0$ is arbitrary as the ODE is autonomous.

In contrast to this ``forward formulation'', in the present work additionally
the ``backward formulation'' $t_\f=t_*=0$ is used. In fact, this is the more
natural formulation for the identification of a trajectory on the slow
invariant manifold which stays on this manifold during backward time
evolution. However, the solution of the backward problem is much more
challenging numerically since it is highly unstable and ill-conditioned for a
dissipative dynamical system.
We will refer to the first case with $t_0=t_*=0$ as \emph{forward mode} and to
the latter ($t_\f=t_*=0$) as \emph{reverse mode}. Both modes can
be seen as special cases of the general formulation (\ref{eq:op}). We deal with
the numerical instability of the
\emph{reverse mode} by a collocation approach (see Section \ref{ss:num_meth})
with a fine discretization grid for the objective functional (\ref{eq:op:of})
and the differential equation constraint
(\ref{eq:op:dyn}) and apply robust interior point optimization methods
\cite{Waechter2006} to solve the resulting high-dimensional nonlinear
programming problem (NLP).

\section{Theoretical Results} \label{s:theory} In this section theoretical
results for the solution of the \emph{reverse mode} problem formulation are
presented. For a general two-dimensional linear system with distinct negative
real eigenvalues and the nonlinear Davis--Skodje test model \cite{Davis1999,
  Singh2002}, it is shown that for infinite time horizon of the
variational problem the exact slow manifold is identified by the solution of
the previously introduced variational problem.

\subsection{Linear Model} \label{ss:theory_lin} We consider the
two-dimensional linear model
\begin{equation} \label{eq:lin}
  \begin{aligned}
    \dot{y}_1(t) &= -\lambda\;y_1(t) \\
    \dot{y}_2(t) &= (-\lambda-\gamma)\;y_2(t)
  \end{aligned}
\end{equation}
with two time scales
$\mathcal{O}(\lambda)$ and $\mathcal{O}(\lambda+\gamma)$ where
$\gamma>0$ measures the spectral gap (stiffness) of the system. In
order to allow for a parameterization of the slow eigenspace by both state
variables $y_1$ and $y_2$, we apply an orthogonal transformation via rotation
matrices $R$.
\begin{equation*}
  R = \begin{pmatrix}
    \cos \frac{\pi}{4}  & -\sin \frac{\pi}{4} \\
    \sin \frac{\pi}{4}  &  \cos \frac{\pi}{4}
  \end{pmatrix}.
\end{equation*}
Hence system (\ref{eq:lin}) is transformed to $\dot{x}=A x$ with
\begin{equation} \label{eq:lin_trans}
  A = \begin{pmatrix}
    -\lambda-\frac{\gamma}{2} & \frac{\gamma}{2} \\
     \frac{\gamma}{2} & -\lambda-\frac{\gamma}{2}
  \end{pmatrix}
\end{equation}
and the slow eigenspace is the first bisectrix $x_1 \equiv x_2$. Since
orthogonal transformations purely rotate the phase portrait of the dynamical
system, the following considerations capture the general linear
two-dimensional case.\\

\begin{theorem}\label{thm:lin}
  Let $\dot{x}=Ax$ be a two-dimensional linear model, $A$ as in
  (\ref{eq:lin_trans}) with distinct (real-valued) eigenvalues
  $-\lambda$ and $-(\lambda+\gamma)$, $\gamma \in \mathbb{R}^+$, fast and slow
  eigenspaces $\Lambda_{\mathrm{f}}$ and $\Lambda_{\mathrm{s}}$ corresponding
  to $-(\lambda+\gamma)$ and $-\lambda$, respectively.
  Let $x^{*}$ be the optimal solution of (\ref{eq:op}) with
  $t_{*}=t_{\f}\in\mathbb{R}$, $g \equiv 0$, $f(x)=Ax$, and $\Phi \left( x(t)
  \right) = \left\|J_f(x(t))\;f(x(t))\right\|^2_2 =
  \left\|AAx(t)\right\|^2_2$.

Then for all $\gamma>0$ and $t_0<t_\f$ it holds
  \begin{equation*}
    \lim_{t_0 \rightarrow -\infty} d(x^*(t_{\mathrm f}), \Lambda_{\mathrm s})
    = \lim_{t_0 \rightarrow -\infty} \inf_{b \in \Lambda_{\mathrm s}}
    \|x^*(t_{\mathrm f}) - b \|_2 = 0.
  \end{equation*}
\end{theorem}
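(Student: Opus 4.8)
The plan is to exploit that $A$ in (\ref{eq:lin_trans}) is symmetric, hence diagonalizable in an orthonormal eigenbasis: $v_{\mathrm s}=\tfrac{1}{\sqrt2}(1,1)^{\mathrm T}$ with eigenvalue $-\lambda$ spanning $\Lambda_{\mathrm s}$, and $v_{\mathrm f}=\tfrac{1}{\sqrt2}(1,-1)^{\mathrm T}$ with eigenvalue $-(\lambda+\gamma)$ spanning $\Lambda_{\mathrm f}$. First I would write every admissible trajectory in these coordinates as $x(t)=\alpha(t)\,v_{\mathrm s}+\beta(t)\,v_{\mathrm f}$. In reverse mode ($t_*=t_{\f}$) the dynamics constraint (\ref{eq:op:dyn}) forces $x(t)=\e^{A(t-t_{\f})}x(t_{\f})$, so the entire trajectory is determined by the endpoint $x(t_{\f})$, while the fixing constraint (\ref{eq:op:pv}) pins one coordinate of that endpoint, say $x_1(t_{\f})=c$ (the other case being symmetric). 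In eigencoordinates this reads $\alpha(t_{\f})+\beta(t_{\f})=\sqrt2\,c$, so the optimization collapses to a single scalar degree of freedom, which I parameterize by the fast-mode amplitude $\beta_{\f}:=\beta(t_{\f})$, with $\alpha_{\f}:=\alpha(t_{\f})=\sqrt2\,c-\beta_{\f}$.

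Next I would evaluate the objective explicitly. Since $A^2 v_{\mathrm s}=\lambda^2 v_{\mathrm s}$ and $A^2 v_{\mathrm f}=(\lambda+\gamma)^2 v_{\mathrm f}$, orthonormality gives $\|A^2 x(t)\|_2^2=\lambda^4\alpha(t)^2+(\lambda+\gamma)^4\beta(t)^2$, and backward integration of $\dot x=Ax$ yields $\alpha(t)=\alpha_{\f}\,\e^{-\lambda(t-t_{\f})}$ and $\beta(t)=\beta_{\f}\,\e^{-(\lambda+\gamma)(t-t_{\f})}$. The two modes decouple and the integral over $[t_0,t_{\f}]$ reduces to two elementary exponential integrals, producing a strictly convex quadratic $J(\beta_{\f})=\tfrac12 a(\sqrt2\,c-\beta_{\f})^2+\tfrac12 b\,\beta_{\f}^2$ with $a=\lambda^3(\e^{2\lambda(t_{\f}-t_0)}-1)>0$ and $b=(\lambda+\gamma)^3(\e^{2(\lambda+\gamma)(t_{\f}-t_0)}-1)>0$. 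Setting $J'(\beta_{\f})=0$ yields the unique minimizer in closed form, $\beta_{\f}^*=\sqrt2\,c\,a/(a+b)$.

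Finally I would pass to the limit $t_0\to-\infty$. Because $A$ is symmetric, $d(x^*(t_{\f}),\Lambda_{\mathrm s})$ equals the length of the component of $x^*(t_{\f})$ along the unit vector $v_{\mathrm f}$, i.e.\ exactly $|\beta_{\f}^*|$, so it suffices to show $\beta_{\f}^*\to0$. Writing $\beta_{\f}^*=\sqrt2\,c\,(1+b/a)^{-1}$ and observing that $b/a=\frac{(\lambda+\gamma)^3}{\lambda^3}\cdot\frac{\e^{2(\lambda+\gamma)(t_{\f}-t_0)}-1}{\e^{2\lambda(t_{\f}-t_0)}-1}\to\infty$ as $t_0\to-\infty$, I conclude $\beta_{\f}^*\to0$, which is the assertion. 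As a by-product the same formula gives $\beta_{\f}^*\sim\frac{\sqrt2\,c\,\lambda^3}{(\lambda+\gamma)^3}\,\e^{-2\gamma(t_{\f}-t_0)}$, exhibiting exponential convergence at the rate set by the spectral gap, in line with the error quantification announced in the abstract.

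The reduction to a one-dimensional convex minimization via the eigenbasis and the backward flow makes the bulk of the argument routine; the one step requiring care is the asymptotic comparison of the two exponential integrals, where one must verify that the fast-mode penalty $b$ genuinely outgrows the slow-mode penalty $a$ (this is exactly where $\gamma>0$ enters), so that the minimizer is driven onto $\Lambda_{\mathrm s}$ rather than merely remaining bounded.
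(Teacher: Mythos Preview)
Your argument is correct and follows essentially the same route as the paper: parameterize the admissible trajectories by the two eigenmode amplitudes, use the fixing constraint at $t_{\f}$ to eliminate one of them, minimize the resulting scalar quadratic, and let $t_0\to-\infty$. The paper carries this out in the rotated coordinates $(x_1,x_2)$ with $\lambda=1$ and $I_{\mathrm{fixed}}=\{2\}$ (its constants $c_1,c_2$ are, up to a factor $\sqrt2$, your $\alpha,\beta$), whereas you work directly in the orthonormal eigenbasis and exploit $\|A^2x\|_2^2=\lambda^4\alpha^2+(\lambda+\gamma)^4\beta^2$ to bypass the explicit expansion; this is a presentational streamlining rather than a different idea, and your closed-form error $|\beta_{\f}^*|\sim\tfrac{\sqrt2\,c\,\lambda^3}{(\lambda+\gamma)^3}\e^{-2\gamma(t_{\f}-t_0)}$ matches the paper's error term $\chi$.
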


\begin{proof}
  We assume w.l.o.g.\ the second variable being the progress variable, i.e.\
$I_\textrm{fix}=\{2\}$, and $\lambda=1$. The objective criterion
$\Phi(x(t))$ can be computed as
  \begin{equation}\label{eq:crit_lin}
    \begin{aligned}
      \left\|AAx(t)\right\|_2^2 =
      & ( x_1(t) )^2 \left( 1+2\gamma+3\gamma^2+2\gamma^3+\frac{\gamma^4}{2} \right)\\
      &+ ( x_2(t) )^2 \left( 1+2\gamma+3\gamma^2+2\gamma^3+\frac{\gamma^4}{2} \right)\\
      &+ x_1(t)\; x_2(t)\left(-4\gamma-6\gamma^2-4\gamma^3-\gamma^4 \right).
    \end{aligned}
  \end{equation}
  The general solution of the ODE $\dot{x}=Ax$ is
  \begin{subequations}
    \label{eq:results:SolutionOfODE}
    \begin{align}
      \label{eq:results:SolutionOfODEa}
      x_1(t) &= c_1\e^{-t}+c_2\e^{(-1-\gamma) t}\\
      \label{eq:results:SolutionOfODEb}
      x_2(t) &= c_1\e^{-t}-c_2\e^{(-1-\gamma) t}.
    \end{align}
  \end{subequations}
  Solution (\ref{eq:results:SolutionOfODE}) is substituted into criterion
  (\ref{eq:crit_lin}) and integration over time yields the objective functional
  \begin{align}
    \int_{t_0}^{t_{\f}}\!\left\|AAx(t)\right\|^2_2\,\textrm{d}t &= \int_{t_0}^{t_{\f}}\!\left[2c_1^2\e^{-2t}+\left(2+8\gamma +12\gamma^2+8\gamma^3+2\gamma^4\right)c_2^2\e^{\left(-1-\gamma\right)2t}\right]\,\textrm{d}t\notag\\\notag\\
    \label{eq:results:c_1^2}
    &= c_1^2\left(\e^{-2t_0}-\e^{-2t_{\f}}\right)-\xi
    c_2^2\left(\e^{\left(-1-\gamma\right)2t_0}-\e^{\left(-1-\gamma\right)2t_{\f}}\right),
  \end{align}
  with $\xi = \tfrac{2+8\gamma +12\gamma^2+8\gamma^3+2\gamma^4}{-2-2\gamma} <
  0$.  An expression $c_1(c_2)$ for $c_1$ as a function of $c_2$ can be
  computed from
  (\ref{eq:results:SolutionOfODEb}) which only dependents on $c_2$ because of
  the fixed final value of $x_2(t_\f)$:
  \begin{equation*}
    x_2(t_{\f}) = c_1\e^{-t_{\f}}-c_2\e^{\left(-1-\gamma\right)t_{\f}} \quad \Longrightarrow \quad c_1(c_2) = \frac{x_2\left(t_{\f}\right)+c_2\e^{\left(-1-\gamma\right)t_{\f}}}{\e^{-t_{\f}}}.
  \end{equation*}
  This formula can be used to eliminate $c_1$ from (\ref{eq:results:c_1^2})
  leading to an
  expression $h(c_2)$ only depending on $c_2$ (and $t_0$, $t_\f$, $\gamma$,
  which are assumed to be fixed at the moment)
  \begin{align*}
    h\left(c_2\right)\coloneqq\ &  \frac{\left(x_2^{t_{\f}}\right)^2\e^{-2t_0}}{\e^{-2t_{\f}}}+\frac{\e^{\left(-1-\gamma\right)2t_{\f}}\e^{-2t_0}}{\e^{-2t_{\f}}}c_2^2+\frac{2x_2^{t_{\f}}\e^{\left(-1-\gamma\right){t_{\f}}}\e^{-2t_0}}{\e^{-2t_{\f}}}c_2 \\
    &-\left(x_2^{t_{\f}}\right)^2-\e^{\left(-1-\gamma\right)2t_{\f}}c_2^2-2x_2^{t_{\f}}\e^{\left(-1-\gamma\right)t_{\f}}c_2 \\
    &-\xi \e^{\left(-1-\gamma\right)2t_0}c_2^2
    +\xi\e^{\left(-1-\gamma\right)2t_{\f}}c_2^2,
  \end{align*}
  which should be minimal for identification of the optimal $c_2$. The first
  order necessary condition for a minimum
  $\tfrac{\textrm{d}h(c_2)}{\textrm{d}c_2} = 0$ gives a solution
  \begin{equation*}
    \hat{c}_2=\frac{x_2^{t_{\f}}\e^{\left(-1-\gamma\right)t_{\f}}-x_2^{t_{\f}}\e^{\left(1-\gamma\right)t_{\f}}\e^{-2t_0}}{\e^{-2\gamma t_{\f}}\e^{-2t_0}-\xi \e^{\left(-1-\gamma \right)2t_0}+(\xi-1)\e^{\left(-1-\gamma\right)2t_{\f}}}.
  \end{equation*}
  Checking the second order sufficient conditions
  \begin{equation*}
    \frac{\textrm{d}^2h}{\textrm{d}c_2^2} \equiv 2\e^{-2\gamma t_{\f}}\left(\e^{-2t_0}-\e^{-2t_{\f}}\right)+2\xi\left(\e^{-2t_{\f}}\e^{-2\gamma t_{\f}}-\e^{-2t_0}\e^{-2\gamma t_0}\right) > 0 \quad \forall c_2,\ t_{\f}>t_0
  \end{equation*}
  guarantees $\hat{c}_2$ being a minimum.

  The solution $\hat{c}_2$ and $c_1(\hat{c}_2)$ are substituted in
  (\ref{eq:results:SolutionOfODEa}) evaluated at fixed final time $t_{\f}$
  yielding an
  expression for $x_1(t_{\f})$ additionally depending on $\gamma$ and $t_0$
  \begin{equation}\label{eq:results:Linear_Model_Result}
    \begin{aligned}
      x_1(t_{\f}) &= c_1(\hat{c}_2)\e^{-t_{\f}}+\hat{c}_2\e^{\left(-1-\gamma\right)t_{\f}} = \frac{x_2^{t_{\f}}+\hat{c}_2\e^{\left(-1-\gamma\right)t_{\f}}}{\e^{-t_{\f}}}\e^{-t_{\f}}+\hat{c}_2\e^{\left(-1-\gamma\right)t_{\f}} \\
      &= x_2^{t_{\f}}\left[1+\underbrace{\left(\frac{2\e^{\left(-1-\gamma\right)2t_{\f}}-2\e^{-2\gamma t_{\f}}\e^{-2t_0}}{\e^{-2\gamma t_{\f}}\e^{-2t_0}-\xi\e^{\left(-1-\gamma\right)2t_0}+\left(\xi-1\right)\e^{\left(-1-\gamma\right)2t_{\f}}}\right)}_{\eqqcolon \chi}\right] \\
      &= x_2^{t_{\f}}\left[1+\left(\frac{2\e^{\left(-1-\gamma\right)2t_{\f}}}{\e^{-2\gamma t_{\f}}\e^{-2t_0}-\xi\e^{\left(-1-\gamma\right)2t_0}+\left(\xi-1\right)\e^{\left(-1-\gamma\right)2t_{\f}}} \right.\right.\\
      &\left.\left.\qquad\qquad\qquad-\frac{2\e^{-2\gamma
              t_{\f}}}{\e^{-2\gamma t_{\f}}-\xi\e^{-2\gamma
              t_0}+\left(\xi-1\right)\e^{\left(-1-\gamma\right)2t_{\f}}\e^{2t_0}}\right)\right]
    \end{aligned}
  \end{equation}
  with error term $\chi$ quantifying the deviation from the slow eigenspace
  $x_1 \equiv x_2$. Finally in the limit
  $t_0\rightarrow -\infty$ it can be seen that
  \begin{align*}
    \lim \limits_{t_0 \to -\infty}x_1(t_{\f}) = x_2^{t_{\f}}
  \end{align*}
  meaning the slow eigenspace $x_1(t) = x_2(t)$ is identified by a solution of
  the optimization problem.\hfill
\end{proof}

In Figure \ref{f:results: Linear_Model_3D_Plot} the error term $\chi$ is
plotted. It illustrates that for increasing spectral gap $\gamma$ and
increasing time intervals $[t_0,t_\f]$ the approximation of the SIM improves
while the error decreases exponentially.
\begin{figure}[htb]
  \centering
  \begin{center}
    \includegraphics[width=10cm]{./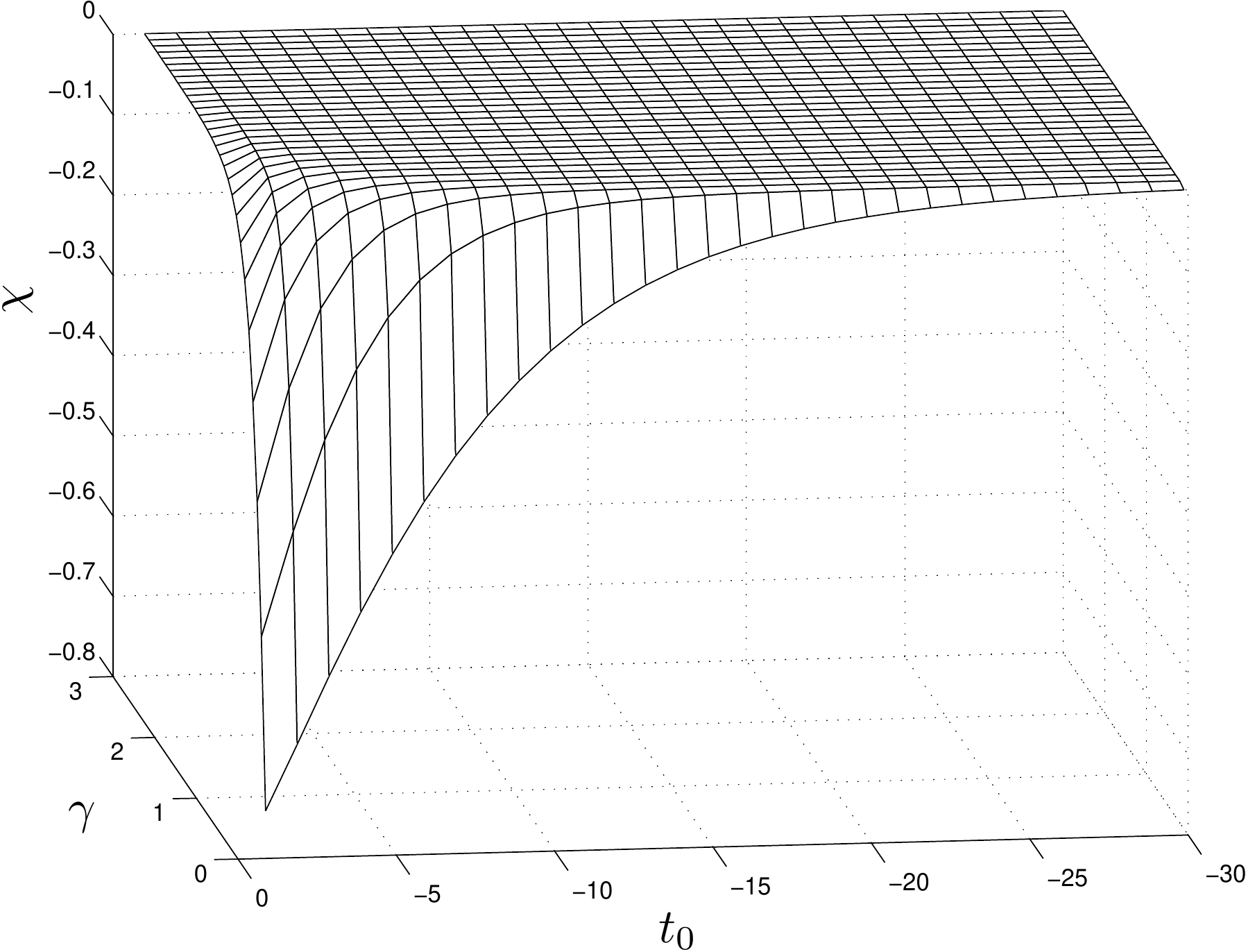}
  \end{center}
  \caption{\label{f:results: Linear_Model_3D_Plot} The error term $\chi$
    in (\ref{eq:results:Linear_Model_Result}) plotted
    against $t_0$ and $\gamma$ with $t_{\f}=0$.}
\end{figure}

\subsection{Davis--Skodje Test Problem} \label{ss:theory_DS}
The Davis--Skodje model (\ref{eq:results:DavisSkodje})
\cite{Davis1999,Singh2002} is widely used for analysis and performance
tests of model reduction techniques supposed to identify slow invariant
manifolds,
\begin{subequations}
  \label{eq:results:DavisSkodje}
  \begin{align}
    \frac{\d x_1}{\d t} &= - x_1 \label{eq:results:DSa}\\
    \frac{\d x_2}{\d t} &= - \gamma x_2 + \frac{(\gamma - 1)x_1 +
      \gamma \label{eq:results:DSb} x_1^2}{(1+x_1)^2},
  \end{align}
\end{subequations}
where $\gamma > 1$ is a measure for the spectral gap (stiffness) of the
system.  Typically model reduction algorithms show a good performance for
large values of $\gamma$, which represent a large time scale separation. Small
values of $\gamma$ impose a significantly harder
challenge on the computation of the slow invariant manifold. For reasons of
adjustable time scale separation and analytically computable SIM, the
Davis--Skodje model is widely used for testing numerical model
reduction approaches. We provide analytical and numerical results for the
variational approach with the Davis--Skodje model. \\

\begin{theorem}\label{thm:DS}
  Let $\dot{x}=f(x)$ be the Davis--Skodje model (\ref{eq:results:DavisSkodje}),
  the slow invariant manifold defined by
  $\Lambda_{\mathrm{s}}\coloneqq\{(x_1,x_2) \in \mathbb{R}^2 \ | \
  x_2=\tfrac{x_1}{1+x_1}$\} and $x^{*}$ the optimal solution of (\ref{eq:op})
  with $t_{*}=t_{\f}\in\mathbb{R}$, $g \equiv 0$, $I_\mathrm{fix} = \{1\}$,
  and $\Phi \left( x(t) \right) = \left\|J_f
    \left(x(t)\right)\;f\left(x(t)\right) \right\|^2_2$. Then for all
  $\gamma>1$, $x_1^{t_\f}>1$ and $t_0<t_\f$ holds
  \begin{equation*}
    \lim_{t_0 \rightarrow -\infty} d(x^*(t_{\mathrm f}), \Lambda_{\mathrm s})
    = \lim_{t_0 \rightarrow -\infty} \inf_{b \in \Lambda_{\mathrm s}}
    \|x^*(t_{\mathrm f}) - b \|_2 = 0.
  \end{equation*}
\end{theorem}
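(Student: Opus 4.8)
The plan is to mirror the strategy used for Theorem \ref{thm:lin}: exploit the fact that the Davis--Skodje system has a closed-form solution, reduce the variational problem to a one-parameter minimization, and analyze the minimizer asymptotically as $t_0 \to -\infty$. First I would record the explicit solution. Since (\ref{eq:results:DSa}) gives $\dot x_1 = -x_1$ and $x_1$ is the fixed progress variable ($I_{\mathrm{fix}}=\{1\}$), one has $x_1(t)=x_1^{t_\f}\e^{t_\f-t}$, completely determined. Substituting $u\coloneqq x_2 - \tfrac{x_1}{1+x_1}$ into (\ref{eq:results:DSb}) one checks $\dot u = -\gamma u$, so the general solution is
\begin{equation*}
  x_2(t) = \frac{x_1(t)}{1+x_1(t)} + C\,\e^{-\gamma t},
\end{equation*}
where the single constant $C$ (equivalently the free value $x_2(t_\f)$) is the only remaining degree of freedom. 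Because $\Lambda_{\mathrm s}$ is the graph $x_2=\tfrac{x_1}{1+x_1}$ and $x_1(t_\f)=x_1^{t_\f}$ is fixed, the point $\bigl(x_1^{t_\f},\tfrac{x_1^{t_\f}}{1+x_1^{t_\f}}\bigr)$ lies on $\Lambda_{\mathrm s}$, whence any admissible trajectory satisfies $d(x(t_\f),\Lambda_{\mathrm s}) \le |C\,\e^{-\gamma t_\f}|$. It therefore suffices to show that the optimal $\hat C$ obeys $\hat C \to 0$ as $t_0\to-\infty$.

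Next I would compute the integrand. Direct differentiation gives $\ddot x_1 = x_1$ and, writing $\phi(t)\coloneqq \frac{x_1(t)\,(1-x_1(t))}{(1+x_1(t))^3}$, the solution yields $\ddot x_2 = \phi(t) + \gamma^2 C\,\e^{-\gamma t}$, so that $\Phi(x(t)) = \enorm{J_f f}^2 = x_1^2 + \bigl(\phi(t)+\gamma^2 C\e^{-\gamma t}\bigr)^2$. The $x_1^2$ term is independent of $C$, hence the objective is the strictly convex quadratic
\begin{equation*}
  \mathcal J(C) = \mathrm{const} + 2C\!\int_{t_0}^{t_\f}\!\gamma^2\e^{-\gamma t}\phi(t)\,\d t + C^2\!\int_{t_0}^{t_\f}\!\gamma^4\e^{-2\gamma t}\,\d t,
\end{equation*}
whose unique minimizer is $\hat C = -N(t_0)/D(t_0)$ with $N(t_0)=\int_{t_0}^{t_\f}\gamma^2\e^{-\gamma t}\phi\,\d t$ and $D(t_0)=\int_{t_0}^{t_\f}\gamma^4\e^{-2\gamma t}\,\d t>0$. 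Positivity of $D$ for $t_0<t_\f$ simultaneously establishes existence and uniqueness of $x^*$.

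Finally I would estimate the two integrals. The denominator is elementary, $D(t_0)=\tfrac{\gamma^3}{2}\bigl(\e^{-2\gamma t_0}-\e^{-2\gamma t_\f}\bigr)$, growing like $\e^{-2\gamma t_0}$. For the numerator the hypothesis $x_1^{t_\f}>1$ guarantees $x_1(t)\ge x_1^{t_\f}>1$ for all $t\le t_\f$, so $\phi$ has constant sign and $|\phi(t)|=\frac{x_1(x_1-1)}{(1+x_1)^3}\le \frac1{x_1}=\frac{1}{x_1^{t_\f}}\e^{t-t_\f}$. Hence
\begin{equation*}
  |N(t_0)| \le \frac{\gamma^2}{x_1^{t_\f}}\e^{-t_\f}\int_{t_0}^{t_\f}\e^{(1-\gamma)t}\,\d t = O\!\bigl(\e^{(1-\gamma)t_0}\bigr),
\end{equation*}
where $\gamma>1$ is exactly what forces $1-\gamma<0$ and makes this integral diverge at the controlled rate $\e^{(1-\gamma)t_0}$. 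Combining, $|\hat C| = |N|/D = O\!\bigl(\e^{(1-\gamma)t_0}/\e^{-2\gamma t_0}\bigr) = O\!\bigl(\e^{(1+\gamma)t_0}\bigr)\to 0$ since $1+\gamma>0$, which gives the claim. The main obstacle is the numerator integral $N$: unlike the linear case it is not a clean elementary expression (the substitution $s=x_1$ turns it into an incomplete-Beta/hypergeometric integral), so the crux is to \emph{avoid} evaluating it exactly and instead bound its growth rate via the estimate on $\phi$, then compare exponential rates in $t_0$. The assumption $x_1^{t_\f}>1$ is precisely what makes that bound sign-definite and uniform over the whole integration interval.
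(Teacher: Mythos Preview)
Your argument is correct and follows the same overall skeleton as the paper---write down the exact solution, reduce the objective to a strictly convex quadratic in the single free constant, solve for the minimizer, and show it tends to zero as $t_0\to-\infty$---but it diverges from the paper in two genuine ways. First, by computing $\Phi=\|\ddot x\|_2^2$ directly along the trajectory and using the substitution $u=x_2-\tfrac{x_1}{1+x_1}$, you arrive at the compact form $\ddot x_2=\phi(t)+\gamma^2C\e^{-\gamma t}$ with $\phi=\tfrac{x_1(1-x_1)}{(1+x_1)^3}$; the paper instead expands $J_f f$ symbolically in $(x_1,x_2)$ and then substitutes the solution, which produces considerably longer intermediate expressions but of course the same quadratic in the free constant. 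Second, and more substantively, the paper \emph{evaluates} the numerator integral exactly (via a computer algebra system), obtaining a closed form involving a Gauss hypergeometric series, and then proves in a separate lemma that each of four resulting summands vanishes in the limit; the convergence of that series is precisely where the hypothesis $x_1^{t_\f}>1$ enters. You sidestep this entirely by bounding $|\phi(t)|\le 1/x_1(t)$ and comparing exponential growth rates, which is shorter, fully elementary, and avoids any appeal to special functions or symbolic computation. What the paper's route buys is an explicit error representation (analogous to the term $\chi$ in the linear case); what yours buys is a self-contained estimate that makes the mechanism---rate $\e^{(1-\gamma)t_0}$ in the numerator against $\e^{-2\gamma t_0}$ in the denominator---completely transparent.
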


\begin{proof}
  The Jacobian of $f$ is given by
  \begin{equation*}
    J_f(x(t)) = \begin{pmatrix} -1 &  0 \\ \frac{(1+\gamma)x_1(t)+\gamma-1}
      {\left(1+x_1(t)\right)^3} &  -\gamma\end{pmatrix}.
  \end{equation*}
  $\Phi\left(x(t)\right)$ in the objective function can be computed
  explicitly as
  \begin{align*}
    \Phi\left(x(t)\right)=
    &\left(x_1(t)\right)^2+\gamma^4\left(x_2(t)\right)^2+\frac{\left(x_1(t)\right)^2\left(1-2\gamma^2+\gamma^4\right)}{\left(1+x_1(t)\right)^6}\\
    &+\frac{\left(x_1(t)\right)^3\left(-2-2\gamma^2+4\gamma^4\right)}{\left(1+x_1(t)\right)^6}+\frac{\left(x_1(t)\right)^4\left(1+2\gamma^2+6\gamma^4\right)}{\left(1+x_1(t)\right)^6}\\
    &+\frac{\left(x_1(t)\right)^5\left(2\gamma^2+4\gamma^4\right)}{\left(1+x_1(t)\right)^6}+\frac{\left(x_1(t)\right)^6\gamma^4}{\left(1+x_1(t)\right)^6}\\
    &-x_1(t)x_2(t)\frac{\left(2\gamma^2x_1(t)-2\gamma^2+2\gamma^4+4\gamma^4x_1(t)+2\gamma^4\left(x_1(t)\right)^2\right)}{\left(1+x_1(t)\right)^3}.
  \end{align*}

  An analytical solution of model (\ref{eq:results:DavisSkodje}) will be
  computed in the following. The first differential equation yields $x_1(t) =
  c_1\e^{-t}$ as a general solution. Equation (\ref{eq:results:DSb}) is a
  inhomogeneous first order
  linear ordinary differential equation and the ansatz of the method
  of variation of parameters gives $ x_2(t) = x_{2,\textrm{hom}}(t) +
  x_{2,\textrm{part}}(t)$.  The homogeneous equations are solved by
  $x_{2,\textrm{hom}}(t) = c_2\e^{-\gamma t}$, because
  $\dot{x}_{2,\textrm{hom}}(t) = -\gamma c_2\e^{-\gamma t} = -\gamma
  x_{2,\textrm{hom}}(t)$. To determine $x_{2,\textrm{part}}(t)$ the relation
  \begin{equation*}
    \e^{-\gamma t}\dot{c}_2(t) = \frac{\left(\gamma-1\right)c_1\e^{-t}+\gamma c_1^2\e^{-2t}}{\left(1+c_1\e^{-t}\right)^2}\\
  \end{equation*}
  has to be solved for $c_2$:
  \begin{equation*}
    c_2(t) = \int\!\dot{c}_2(t)\,\textrm{d}t = \int\!\frac{\left(\gamma-1\right)c_1\e^{-t}+\gamma c_1^2\e^{-2t}}{\e^{-\gamma t}\left(1+c_1\e^{-t}\right)^2}\,\textrm{d}t
    = \frac{c_1\e^{\gamma t}}{c_1+\e^{t}}.
  \end{equation*}
  Therefore, the missing part is
  \begin{equation*}
    x_{2,\textrm{part}}(t) =
    \e^{-\gamma t}\frac{c_1\e^{\gamma t}}{c_1+\e^{t}} =
    \frac{c_1}{c_1+\e^{t}}
  \end{equation*}
  and the full solution of the ODE is given by
  \begin{subequations}
    \label{eq:results:Solution_ODE_DS}
    \begin{align}
      x_1(t) &= c_1\e^{-t} \label{eq:results:Solution_ODE_DSa}\\
      x_2(t) &= c_2\e^{-\gamma
        t}+\frac{c_1}{c_1+\e^{t}}.\label{eq:results:Solution_ODE_DSb}
    \end{align}
  \end{subequations}
  Now the criterion $\Phi$ is integrated over time using
  (\ref{eq:results:Solution_ODE_DS}). In the next formula $r_i$, $i=1,2$ represents a
  ``rest'' -- all terms independent of $x_2(t)$, hence also independent of
  $c_2$, which annihilate after differentiation with respect to $c_2$
  afterwards. Making use of
  $c_1=x_1(t_{\f})\;\e^{t_{\f}}=x_1^{t_{\f}}\e^{t_{\f}}$ due to
  (\ref{eq:results:Solution_ODE_DSa}) yields as an expression for the
  objective function only depending on $c_2$
  \begin{align*}
    h(c_2)
    =&\ r_1+\int_{t_0}^{t_{\f}}\!\gamma^4\left(x_2(t)\right)^2\,\textrm{d}t \\
    &-\int_{t_0}^{t_{\f}}\!x_1(t)x_2(t)\frac{2\gamma^2x_1(t)-2\gamma^2+2\gamma^4+4\gamma^4x_1(t)+2\gamma^4\left(x_1(t)\right)^2}{\left(1+x_1(t)\right)^3}\,\textrm{d}t\\
    =&\ r_2 + c_2^2\left(\frac{1}{2}\gamma^3\e^{-2\gamma t_0}-\frac{1}{2}\gamma^3\e^{-2\gamma t_{\f}}\right)\\
    &+c_2\int_{t_0}^{t_{\f}}\!\!
    \underbrace{\left[\frac{2\gamma^4c_1\e^{-\gamma t}}{c_1+\e^t}
        -2\gamma^2c_1\e^{(-1-\gamma)t}\frac{c_1\e^{-t}-1+\gamma^2\left(1+2c_1\e^{-t}+c_1^2\e^{-2t}\right)}{(1+c_1\e^{-t})^3}\right]}_{\eqqcolon
      \varphi(t)}\textrm{d}t
  \end{align*}

  The necessary first-order condition for a minimum is applied. Setting
  $\tfrac{\d h\left(c_2\right)}{\d c_2}=0$ results in an optimal
  \begin{equation*}
    \check{c}_2 \coloneqq \frac{-\int_{t_0}^{t_{\f}}\!\varphi(t)\,\d t}
    {\gamma^3\e^{-2\gamma t_0}-\gamma^3\e^{-2\gamma t_{\f}}}.
  \end{equation*}
  The second order check
  \begin{equation*}
    \frac{\textrm{d}^2h}{\textrm{d}c_2^2} \equiv
    \gamma^3\e^{-2\gamma t_0}-\gamma^3\e^{-2\gamma t_{\f}} > 0
    \quad \forall c_2,\ t_0<t_{\f}, \ \gamma>1
  \end{equation*}
  assures $\check{c}_2$ being a minimum.

  An expression for $x_2(t_{\f})$ can be derived by substituting $c_1$ and
  $\check{c}_2$ in (\ref{eq:results:Solution_ODE_DSb}):
  \begin{equation*}
    x_2\left(t_{\f}\right) =
    \frac{-\int_{t_0}^{t_{\f}}\!\varphi(t)\,\textrm{d}t}{\gamma^3\e^{-2\gamma
        t_0}-\gamma^3\e^{-2\gamma t_{\f}}}\e^{-\gamma
      t_{\f}}+\frac{x_1^{t_{\f}}}{x_1^{t_{\f}}+1}.
  \end{equation*}
  The proof for the relation
  \begin{equation} \label{eq:lem_rel} \lim_{t_0 \to
      -\infty}\frac{-\int_{t_0}^{t_{\f}}\!\varphi(t)\,\textrm{d}t}{\gamma^3\e^{-2\gamma
        t_0}-\gamma^3\e^{-2\gamma t_{\f}}}\e^{-\gamma t_{\f}} = 0
  \end{equation}
  will be given in the following Lemma \ref{lem:DS}. Because of
  (\ref{eq:lem_rel}) it holds
  \begin{align*}
    \lim \limits_{t_0 \to -\infty}x_2(t_{\f}) =
    \frac{x_1^{t_{\f}}}{x_1^{t_{\f}}+1},
  \end{align*}
  which is the analytic expression for the slow invariant manifold of the
  Davis--Skodje system (see \cite{Davis1999}). This completes the proof.
  \hfill\end{proof}\\

\begin{lemma} \label{lem:DS} Under the conditions of Theorem \ref{thm:DS}
  equation (\ref{eq:lem_rel}) holds.
\end{lemma}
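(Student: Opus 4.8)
The plan is to show that the numerator $-\int_{t_0}^{t_\f}\varphi(t)\,\d t$ grows no faster than $\e^{-\gamma t_0}$ as $t_0\to-\infty$, whereas the denominator $\gamma^3\bigl(\e^{-2\gamma t_0}-\e^{-2\gamma t_\f}\bigr)$ grows like $\e^{-2\gamma t_0}$. Since $\gamma>0$ forces $2\gamma>\gamma$, the quotient then decays like $\e^{\gamma t_0}$, and the fixed factor $\e^{-\gamma t_\f}$ cannot spoil the limit. A crude exponential bound on $\varphi$ already suffices, so the finer cancellation of the two leading $\e^{-\gamma t}$ contributions need not be exploited. Note also that for $t_0<t_\f$ and $\gamma>0$ the denominator is strictly positive, so no sign or singularity issues arise.

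First I would record the consequences of the hypotheses $I_{\mathrm{fix}}=\{1\}$ and $x_1^{t_\f}>1$. Since $c_1=x_1^{t_\f}\e^{t_\f}$, these give $c_1>\e^{t_\f}>0$, hence $x_1(t)=c_1\e^{-t}>1$ for all $t\le t_\f$. Consequently $1+c_1\e^{-t}\ge 2$ and $c_1+\e^t\ge c_1>0$ throughout, so every denominator appearing in $\varphi$ stays bounded away from zero and $\varphi$ is continuous on $(-\infty,t_\f]$.

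Next I would derive a uniform bound $|\varphi(t)|\le C\,\e^{-\gamma t}$ valid for all $t\le t_\f$, with $C=C(\gamma)$. The key algebraic observation is that $1+2c_1\e^{-t}+c_1^2\e^{-2t}=(1+c_1\e^{-t})^2$ is a perfect square, which collapses the bracketed rational factor in the second summand of $\varphi$ to $\tfrac{c_1\e^{-t}-1}{(1+c_1\e^{-t})^3}+\tfrac{\gamma^2}{1+c_1\e^{-t}}$. Writing $u=c_1\e^{-t}\ge 1$, the first summand of $\varphi$ is at most $2\gamma^4\e^{-\gamma t}$ (using $c_1+\e^t\ge c_1$), while the second equals $-2\gamma^2\e^{-\gamma t}\bigl[\tfrac{u(u-1)}{(1+u)^3}+\tfrac{\gamma^2 u}{1+u}\bigr]$, whose bracket is bounded by $1+\gamma^2$ for $u\ge 1$ (indeed $\tfrac{u(u-1)}{(1+u)^3}\le \tfrac1u\le 1$ and $\tfrac{u}{1+u}<1$). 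This yields the claimed bound with, e.g., $C=4\gamma^4+2\gamma^2$.

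Finally I would integrate this bound directly, $\bigl|\int_{t_0}^{t_\f}\varphi(t)\,\d t\bigr|\le \int_{t_0}^{t_\f}C\e^{-\gamma t}\,\d t=\tfrac{C}{\gamma}\bigl(\e^{-\gamma t_0}-\e^{-\gamma t_\f}\bigr)\le \tfrac{C}{\gamma}\e^{-\gamma t_0}$, and divide by $\gamma^3\bigl(\e^{-2\gamma t_0}-\e^{-2\gamma t_\f}\bigr)$. For $t_0$ sufficiently negative the denominator exceeds $\tfrac12\gamma^3\e^{-2\gamma t_0}$, so after multiplying by $\e^{-\gamma t_\f}$ the whole expression is at most $\tfrac{2C}{\gamma^4}\,\e^{-\gamma t_\f}\,\e^{\gamma t_0}$, which tends to $0$ as $t_0\to-\infty$, establishing (\ref{eq:lem_rel}). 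The only delicate step is the uniform bound on $\varphi$; once the perfect-square identity is used and $u\ge 1$ is exploited, the bracketed terms are manifestly bounded and the estimate is routine. I expect no trouble in the limit argument itself, since the mismatch between the $\e^{-\gamma t_0}$ growth of the numerator and the $\e^{-2\gamma t_0}$ growth of the denominator is decisive.
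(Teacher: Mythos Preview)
Your argument is correct. The perfect-square identity $1+2c_1\e^{-t}+c_1^2\e^{-2t}=(1+c_1\e^{-t})^2$ indeed collapses the rational factor, the bound $|\varphi(t)|\le C\e^{-\gamma t}$ for $t\le t_\f$ follows exactly as you describe (with $u=c_1\e^{-t}\ge x_1^{t_\f}>1$ ensuring all denominators are controlled), and the comparison of the $\e^{-\gamma t_0}$ growth of the numerator against the $\e^{-2\gamma t_0}$ growth of the denominator settles the limit.

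The route is genuinely different from the paper's. The paper evaluates $\int_{t_0}^{t_\f}\varphi(t)\,\d t$ in closed form (via Mathematica), obtaining one elementary antiderivative $\psi_1$ and one expressed as a Gauss hypergeometric series $\psi_2$; it then splits the quotient into four pieces $\Psi_{1,t_\f},\Psi_{1,t_0},\Psi_{2,t_\f},\Psi_{2,t_0}$ and argues separately that each tends to zero, invoking the ratio test for the hypergeometric series and using the hypothesis $x_1^{t_\f}>1$ precisely to guarantee absolute convergence of that series. Your approach bypasses the explicit antiderivative entirely: a crude pointwise exponential bound on $\varphi$ already forces the integral to be $O(\e^{-\gamma t_0})$, which is all that is needed. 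This is more elementary and more robust---it avoids computer algebra and special-function machinery, and the hypothesis $x_1^{t_\f}>1$ enters only to make the estimate on the bracketed term slightly cleaner (in fact your scheme would go through with minor changes for any $x_1^{t_\f}>0$). What the paper's approach buys, in exchange for the heavier machinery, is an explicit expression for the error at finite $t_0$, which could in principle be used for quantitative rate estimates; your bound already gives the exponential rate $\e^{\gamma t_0}$, so in practice nothing is lost.
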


\begin{proof}
In the following $\left(x\right)_n$ is the Pochhammer symbol which is defined as $(x)_n\coloneqq x\left(x+1\right)\cdots\left(x+n-1\right)$. For the
proof the integral in (\ref{eq:lem_rel}) is evaluated using Wolfram Mathematica$^\circledR$ 7:
\begin{equation*}
  \int_{t_0}^{t_{\f}}\!\varphi(t)\,\textrm{d}t
  =\left[\underbrace{\frac{-2\gamma^2\e^{\left(1-\gamma\right)t}\left(c_1\gamma+c_1+\gamma\e^t\right)}{\left(c_1+\e^t\right)^2}}_{\eqqcolon\psi_1(t)}\right]_{t_0}^{t_{\f}}-\left[\underbrace{\frac{\sum\limits_{n=0}^\infty\frac{\left(1\right)_n\left(1-\gamma\right)_n}{\left(2-\gamma\right)_n n!}2\gamma^4\e^{\left(1-\gamma\right)t}\frac{\e^{nt}}{c_1^n}}{c_1\left(\gamma-1\right)}}_{\eqqcolon\psi_2(t)}\right]_{t_0}^{t_{\f}}.
\end{equation*}
With this result the values of $\psi_i(t)$, $i=1,2$ at $t=t_0, t_\f$ are
substituted replacing the integral in equation (\ref{eq:lem_rel}) and four
summands can be regarded separately in the limit:
\begin{align*}
 \Psi_{1,t_\f}(t_0) &\coloneqq \frac{2\gamma^2\e^{\left(1-\gamma\right)t_{\f}}\left(x_1^{t_\f}\e^{t_\f}\gamma+x_1^{t_\f}\e^{t_\f}+\gamma\e^{t_\f}\right)\e^{-\gamma t_{\f}}}{\left(x_1^{t_{\f}}\e^{t_{\f}}+\e^{t_{\f}}\right)^2\left(\gamma^3\e^{-2\gamma t_0}-\gamma^3\e^{-2\gamma t_{\f}}\right)} \\
  \Psi_{1,t_0}(t_0) &\coloneqq \frac{2\gamma^2\e^{\left(1-\gamma\right)t_0}\left(x_1^{t_\f}\e^{t_\f}\gamma+x_1^{t_\f}\e^{t_\f}+\gamma\e^{t_0}\right)\e^{-\gamma t_{\f}}}{\left(x_1^{t_{\f}}\e^{t_{\f}}+\e^{t_0}\right)^2\left(\gamma^3\e^{-2\gamma t_0}-\gamma^3\e^{-2\gamma t_{\f}}\right)} \\
  \Psi_{2,t_\f}(t_0) &\coloneqq \frac{\sum\limits_{n=0}^\infty\frac{\left(1\right)_n\left(1-\gamma\right)_n}{\left(2-\gamma\right)_n n!}2\gamma^4\frac{\e^{\left(1-\gamma\right)t_{\f}}}{\left(x_1^{t_{\f}}\right)^n}\e^{-\gamma t_{\f}}}{\left(x_1^{t_{\f}}\e^{t_{\f}}\left(\gamma-1\right)\right)\left(\gamma^3\e^{-2\gamma t_0}-\gamma^3\e^{-2\gamma t_{\f}}\right)} \\
  \Psi_{2,t_0}(t_0) &\coloneqq
  \frac{\sum\limits_{n=0}^\infty\frac{\left(1\right)_n\left(1-\gamma\right)_n}{\left(2-\gamma\right)_n
   n!}2\gamma^4\e^{\left(1-\gamma\right)t_0}\frac{\e^{nt_0}}{\left(x_1^{t_{\f}}\right)^n\e^{nt_{\f}}}\e^{-\gamma
   t_{\f}}}{\left(x_1^{t_{\f}}\e^{t_{\f}}\left(\gamma-1\right)\right)\left(\gamma^3\e^{-2\gamma
   t_0}-\gamma^3\e^{-2\gamma t_{\f}}\right)}.
\end{align*}
The limit of the first two expressions $\lim_{t_0 \to
  -\infty}\Psi_{1,t_\f}(t_0)=\lim_{t_0 \to -\infty}\Psi_{1,t_0}(t_0)=0$ is
evident. The two terms $\Psi_{2,t_\f}(t_0)$ and $\Psi_{2,t_0}(t_0)$ contain a
hypergeometric series. $\Psi_{2,t_\f}(t_0)$ is absolutely convergent if
$x_1^{t_\f}>1$, $\Psi_{2,t_0}(t_0)$ is absolutely convergent if
$\left|{\e^{t_0}}/{x_1^{t_\f}\e^{t_\f}}\right|<1$ (generalized ratio
test). This is always fullfilled for $t_0$ small enough.
Therefore, $\lim_{t_0 \to -\infty}\Psi_{2,t_\f}(t_0)=\lim_{t_0 \to -\infty}\Psi_{2,t_0}(t_0)=0$.
\hfill\end{proof}

\section{Numerical Results} \label{s:num_res}
We present numerical results for
the examples investigated theoretically in the previous section. Additionally,
numerical SIM computations for a simplified realistic hydrogen combustion
mechanism are shown.

\subsection{Numerical Methods} \label{ss:num_meth}
After suitable discretization the optimization problem
(\ref{eq:op}) can be solved as a standard nonlinear programming problem
(NLP), for example via the sequential quadratic programming (SQP) method
\cite{Powell1978} or interior point (IP) methods, e.g.\
\cite{Forsgren2002}. In particular, one has to decide how to treat the
differential equation constraint and the objective functional. The easiest way
is a decoupled iterative approach, a full numerical integration
of the ODE model
with the current values of the variables subject to optimization. This
procedure is
called the sequential (or single shooting) approach since it fully decouples
simulation of the model and optimization. However, it is often beneficial to
have an ``all at once'' approach that couples simulation and optimization via
explicit discretization of the ODE constraint. This so-called simultaneous
approach has
the advantage of introducing more freedom into the optimization problem since
the differential equation model does not have to be solved exactly in each
iteration of the optimization algorithm. Especially for highly unstable ODE
problems such as (\ref{eq:op:dyn}) considered backwards in time a fully
discrete collocation approach seems
appropriate for the ODE constraint. On a predefined time grid the collocation
method constructs polynomials obeying the differential equation at a
certain number of nodes depending on its degree. For the numerical solutions
presented in this work we use a Radau-method with linear, quadratic, and cubic
polynomials, respectively, \cite{Ascher1998}.

The main difference between SQP and IP optimization methods for the solution
of an NLP is
the treatment of inequality constraints. Whereas SQP identifies the set of
active constraints in the solution, IP formally couples the constraint
violation to the objective function via a penalty term. Both methods finally
use variants of Newton's method applied to
the necessary optimality conditions, cf.\ \cite{Nocedal2006}. For the
numerical results presented in this work the NLP has been solved using the
robust interior point method implemented in IPOPT \cite{Waechter2006} including
linear algebra solvers of the HSL
routines \cite{HSL2007}. The required derivatives are computed using the open
source automatic differentiation package CppAD \cite{Bell2008}. Plots are
generated using MATLAB$^\circledR$.

\subsection{Linear Model} \label{ss:num_res_lin}
Figure \ref{f:ZWEIER} and Figure \ref{f:ZWEIERb} depict numerical solution
results of problem (\ref{eq:op}) with the linear model (\ref{eq:lin}) and
small time scale separation $\gamma=0.2$ and $\gamma=1.0$,
respectively. Solutions for the \emph{forward mode} and \emph{reverse mode}
are shown. In all cases $x_2$ is chosen as reaction progress variable
(parameterization of the SIM) and fixed at four different values:
$x_2^{t_{\f}}=2.0,$ $1.5,$ $1.0,$ $0.5$, for each of which the optimization
problem is solved to obtain the coordinate of the second variable supposed to
be located on the SIM (here slow eigenspace). The red curve is the SIM (slow
eigenspace) which is given as
the first bisectrix and the blue curves are the trajectories integrated
numerically starting from those points (blue circles) that have been computed
as solutions of the optimization problem. The red dot represents the
equilibrium point (stable fixed point). Obviously the \emph{reverse mode} gives
solutions that are significantly closer to the SIM than the \emph{forward mode}.
\begin{figure}[htb]
  \begin{center}
    \subfigure[\label{a} {\em Forward mode}: $t_0=0.0,
    t_\f=10.0$.]{\includegraphics[width=6.cm]{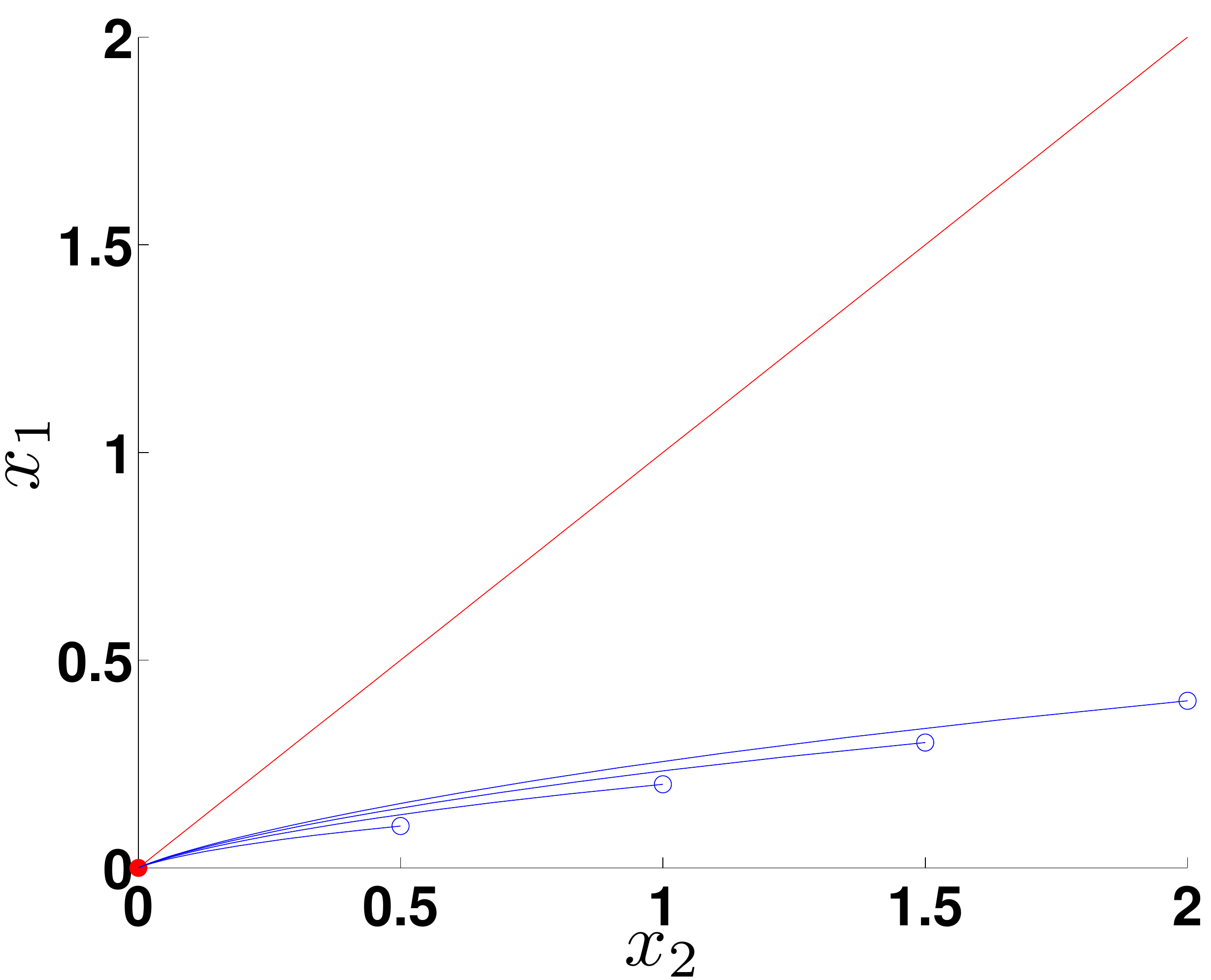}}\hfill
    \subfigure[\label{b} {\em Reverse mode}: $t_0=-21.0,
    t_\f=0.0$.]{\includegraphics[width=6.cm]{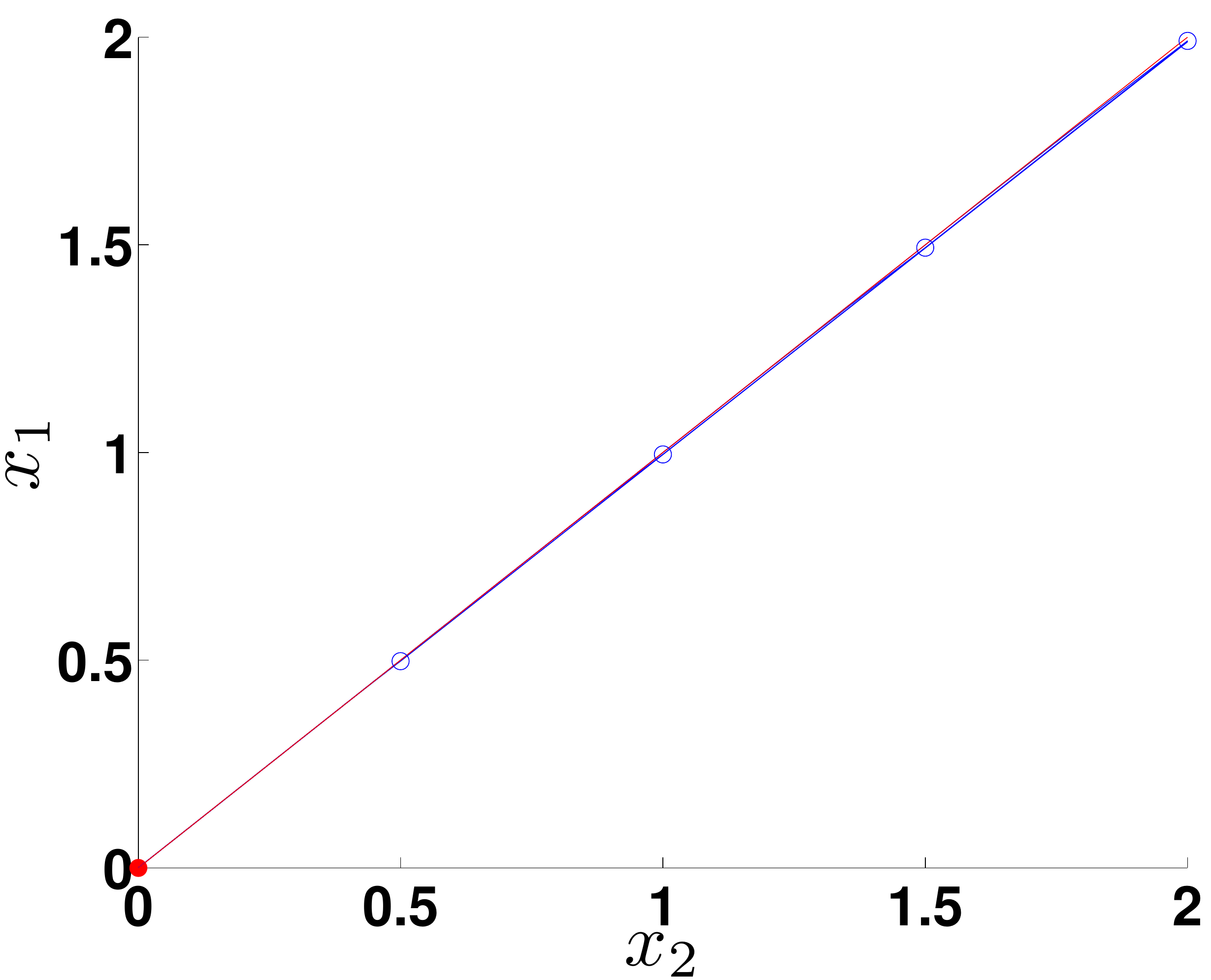}}
  \end{center}
  \caption{\label{f:ZWEIER}Results for the linear model (\ref{eq:lin_trans}) with $\gamma=0.2$,
    (a) forward mode: $x_2(t_0)=x_2^{t_0}$, and (b) backward mode:
    $x_2(t_{\f})=x_2^{t_{\f}}$.}
\end{figure}

\begin{figure}[htb]
  \begin{center}
    \subfigure[\label{aa}Forward mode with $t_0=0.0$ and
    $t_\f=10.0$.]{\includegraphics[width=6.cm]{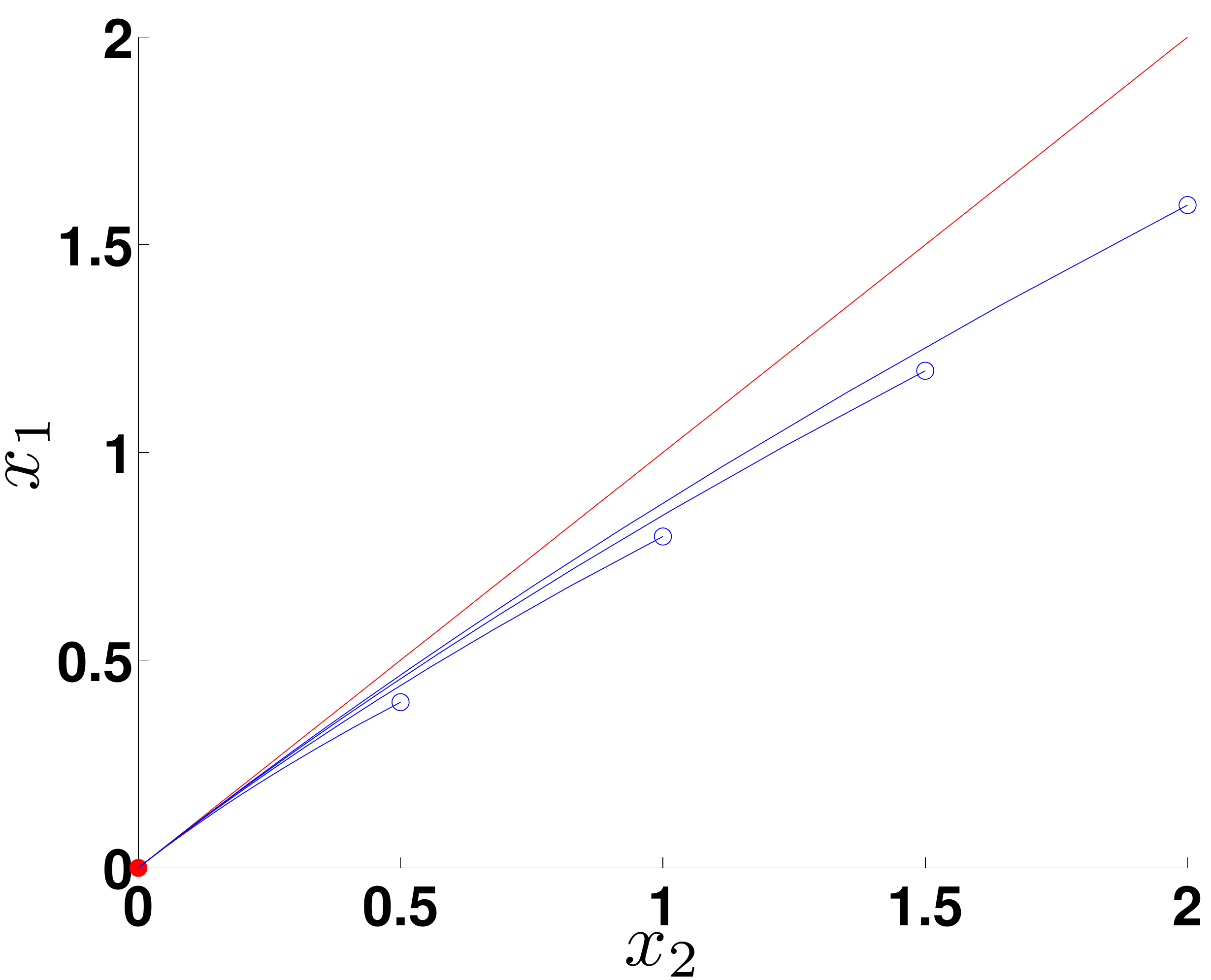}}\hfill
    \subfigure[\label{bb}Reverse mode with $t_0=-17.0$ and
    $t_\f=0.0$.]{\includegraphics[width=6.cm]{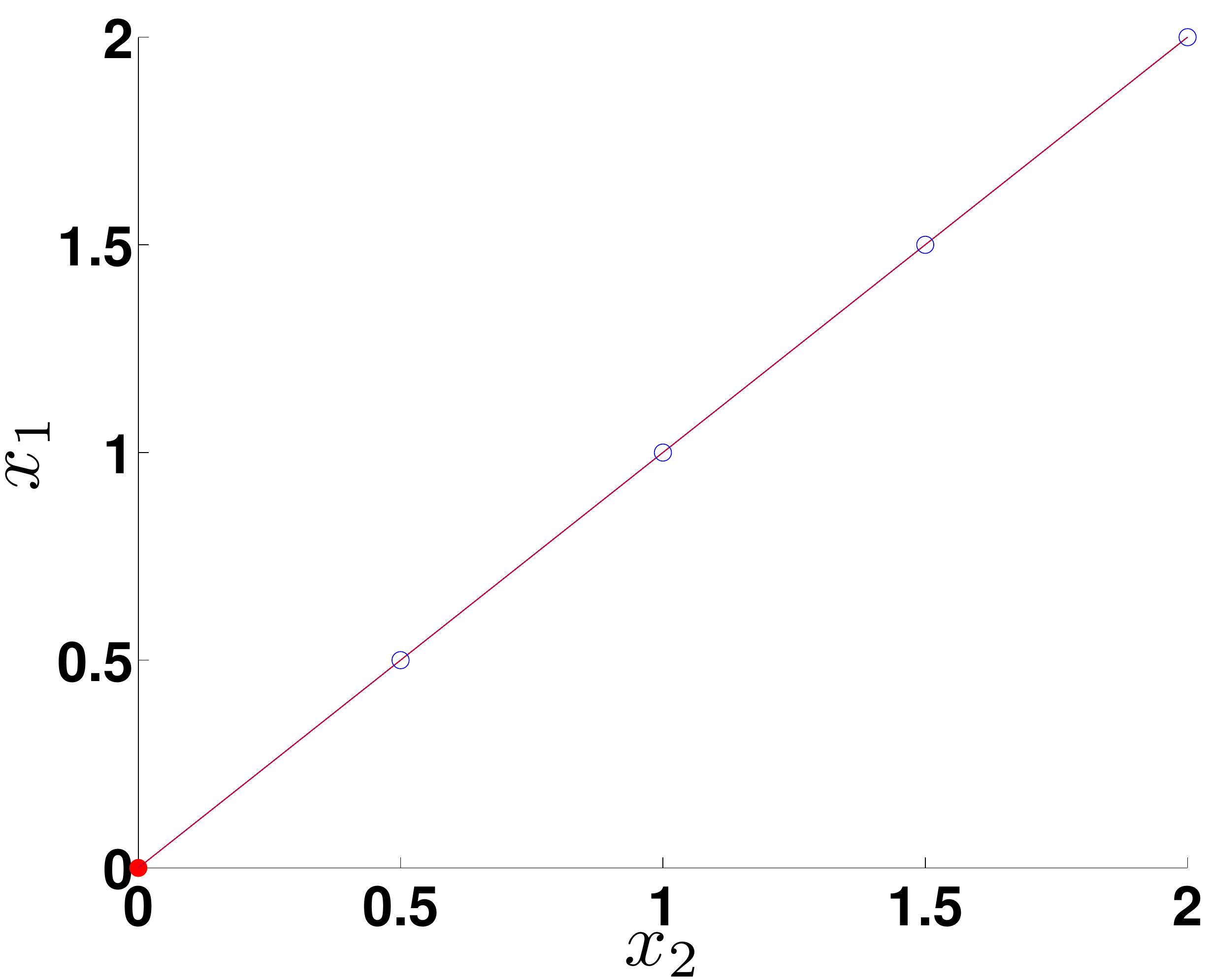}}
  \end{center}
  \caption{\label{f:ZWEIERb}Results for the linear model (\ref{eq:lin_trans}) with $\gamma=1.0$,
    (a) forward mode: $x_2(t_0)=x_2^{t_0}$, and (b) backward mode:
    $x_2(t_{\f})=x_2^{t_{\f}}$.}
\end{figure}

\subsection{Davis--Skodje Test Problem} \label{ss:num_res_DS}
\begin{figure}[htb]
  \begin{center}
    \subfigure[\label{DSa} {\em Forward mode}, $t_0=0.0,
    t_\f=10.0$.]{\includegraphics[width=6.cm]{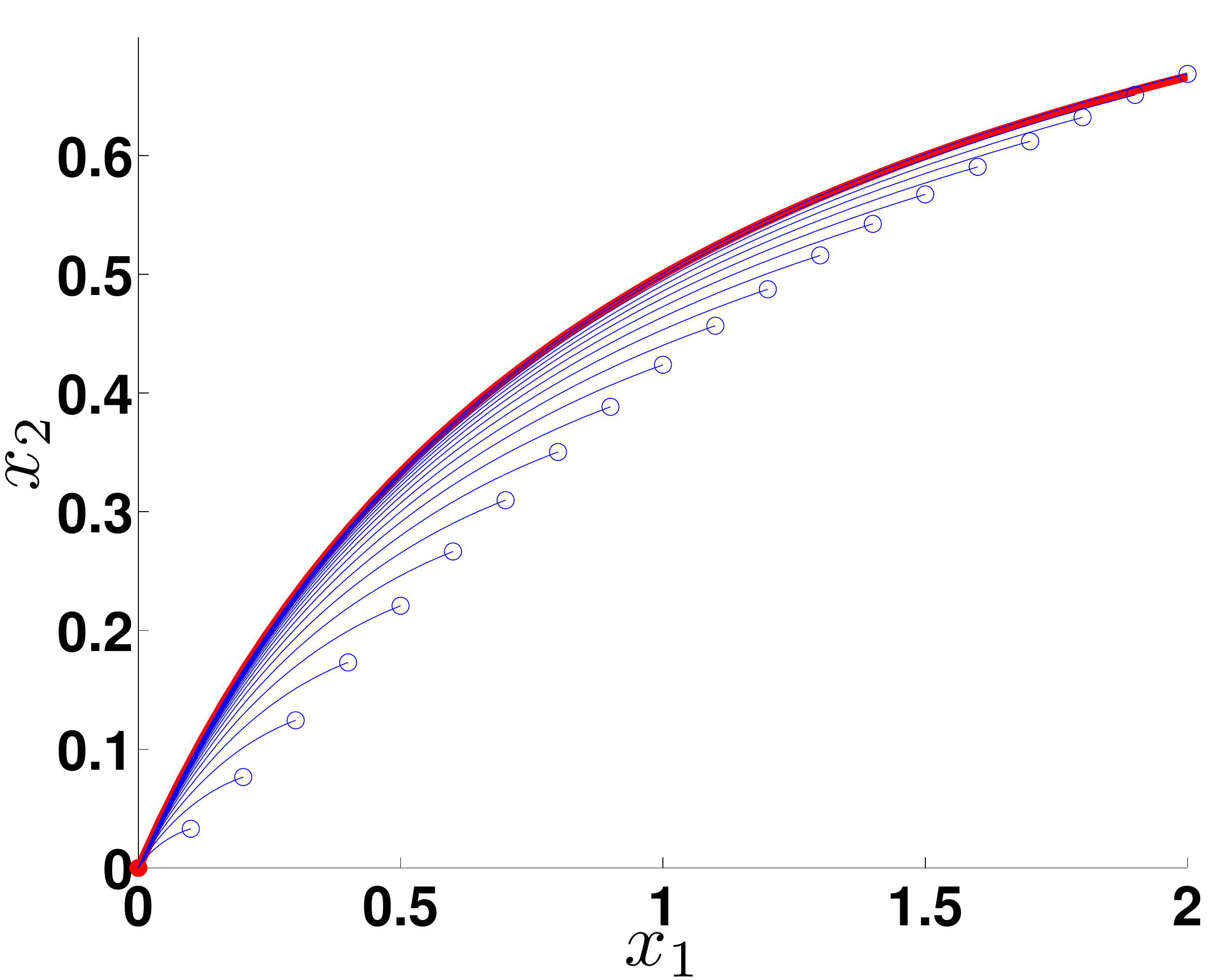}}\hfill
    \subfigure[\label{DSb} {\em Reverse mode}, $t_0=-8.0,
    t_\f=0.0$.]{\includegraphics[width=6.cm]{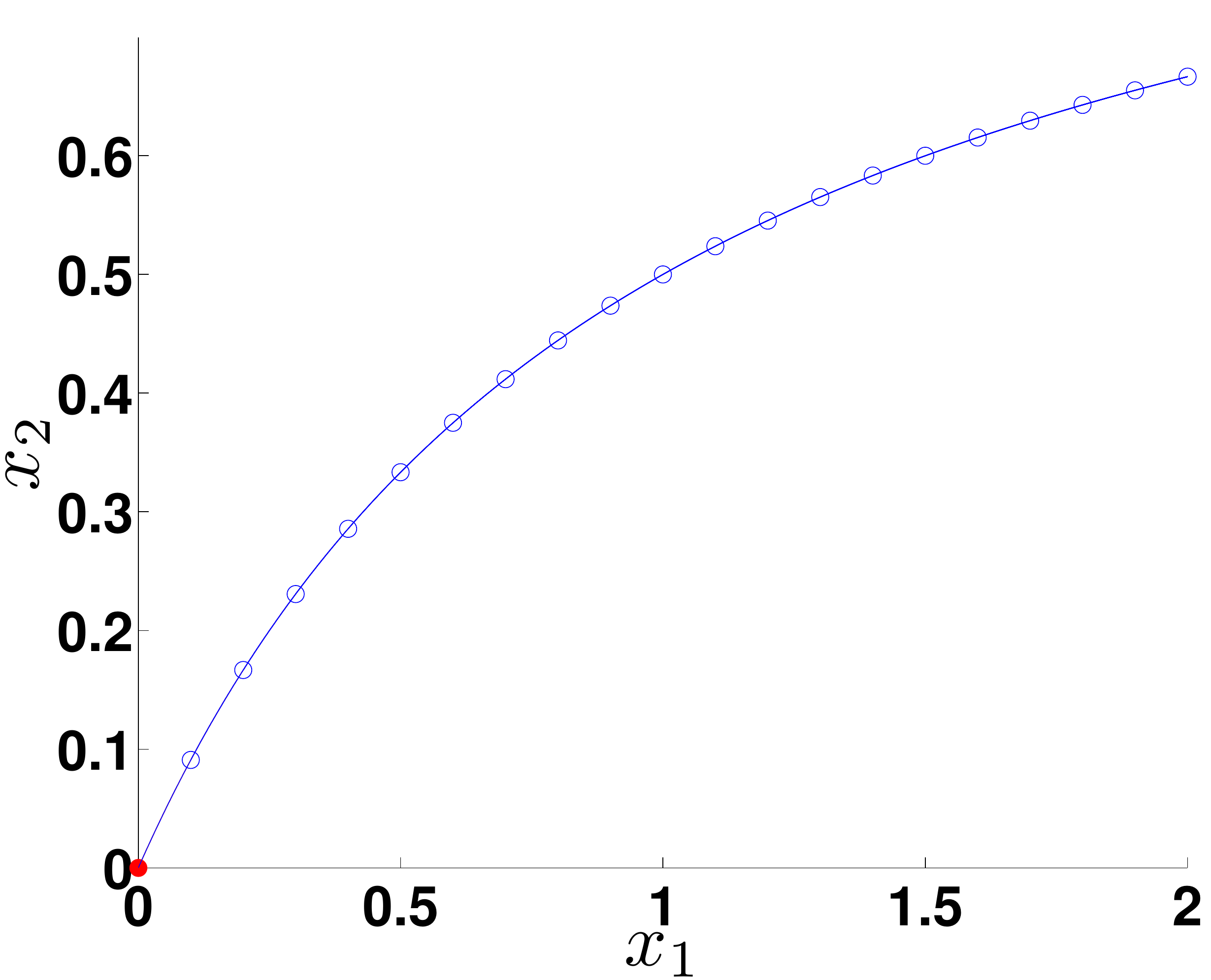}}
  \end{center}
  \caption{\label{f:DS}Results for the Davis--Skodje test problem with (a)
    forward mode: $x_1\left(t_0\right)=x_1^{t_0}$, and (b) reverse mode:
    $x_1\left(t_{\f}\right)=x_1^{t_{\f}}$. The red curve
    represents the analytically calculated SIM, the blue curves are the
    trajectories integrated numerically from those points that are the
    solutions of the optimization problem (blue circles), the red dot
    represent the chemical equilibrium point.}
\end{figure}
Similar results are shown in Figure \ref{f:DS}. Here the Davis--Skodje test
problem is used for computations with \emph{forward mode} (Fig.~\ref{DSa}) and
\emph{reverse mode} (Fig.~\ref{DSb}). In this case $x_1$ is chosen as reaction
progress variable (SIM parameterization) and fixed at several values between
$0.2$ and $2.0$ for the computation of SIM points as solutions of the
optimization problem. The spectral gap parameter is chosen as $\gamma = 1.2$.

\emph{Forward mode} solutions show a larger deviation from
the SIM (slow eigenspace) and a lack of invariance, whereas \emph{reverse mode}
solutions are highly accurate representations of the SIM.

\subsection{Simplified Realistic Mechanism} \label{ss:ice-pic}
As a third example numerical results for a simplified realistic mechanism for
hydrogen combustion are presented. The corresponding full mechanism was
originally published as a detailed hydrogen combustion mechanism by Li et al.\
in \cite{Li2004}. Ren et al.\ simplified the mechanism and used it for testing
their ICE-PIC model reduction method in
\cite{Ren2006a}. We use an adapted version of the simplified one. It consists
of six chemical species (including the inert gas $\spec{N}_2$) and twelve
chemical reactions as given in Table \ref{mech:simplhyd}. Element mass
conservation relations (in the general problem formulation equality constraints
(\ref{eq:op:cr})) for this mechanism are
\begin{align*}
  x_\mathrm{H}+2x_{\mathrm{H}_2}+x_\mathrm{OH}+2x_{\mathrm{H}_2\mathrm{O}}&=0.15 \\
  x_\mathrm{OH}+x_\mathrm{O}+x_{\mathrm{H}_2\mathrm{O}}&=0.05 \\
  2x_{\mathrm{N}_2}&=1.6.
\end{align*}
\begin{table}[htb]
  \caption{\label{mech:simplhyd}Adapted version of the simplified mechanism of \cite{Ren2006a}. Rate coefficients $k$ are computed in dependance of temperature $T$ as $k=AT^b\mathrm{exp}\left(-E_a/RT\right)$, where $R$ is the universal gas constant. In the mechanism $\spec{M}$ represents a third body being any species with collision efficiency $f_\mathrm{H}=1$, $f_{\mathrm{H}_2}=2.5$, $f_\mathrm{OH}=1$, $f_\mathrm{O}=1$, $f_{\mathrm{H}_2\mathrm{O}}=12$, and $f_{\mathrm{N}_2}=1$.}
  \centerline{
    \begin{tabular}{@{}lclrrr@{}}
      \toprule
      Reaction &&& $A$ / $\text{cm}, \text{mol}, \text{s}$ & $b$ & $E_\text{a}$ / $\frac{\text{kJ}}{\text{mol}}$ \\
      \midrule
      $\spec{O} + \spec{H_2} $ & $\rightarrow$ & $\spec{H} + \spec{OH}$ &  $5.08\times 10^{04}$ & $2.7$ & $26.3$  \\
      $\spec{H} + \spec{OH}$ & $\rightarrow$ & $\spec{O} + \spec{H_2}$ & $2.24\times 10^{04}$ & $2.7$ & $18.5$  \\
      $\spec{H_2} + \spec{OH}$ & $\rightarrow$ & $\spec{H_2O} + \spec{H}$ & $2.16\times 10^{08}$ & $1.5$ & $14.4$ \\
      $\spec{H_2O} + \spec{H}$ & $\rightarrow$ & $\spec{H_2} + \spec{OH}$ & $9.62\times 10^{08}$ & $1.5$ & $77.7$ \\
      $\spec{O}$ + $\spec{H_2O}$ & $\rightarrow$ & $\spec{2}\,\spec{OH}$ & $2.97\times 10^{06}$ & $2.0$ & $56.1$ \\
      $\spec{2}\,\spec{OH}$ & $\rightarrow$ & $\spec{O} + \spec{H_2O}$ & $2.94\times 10^{05}$ & $2.0$ & $-15.1$ \\
      $\spec{H_2} + \spec{M}$ & $\rightarrow$ & $\spec{2}\,\spec{H} + \spec{M}$ & $4.58\times 10^{19}$ & $-1.4$ & $436.7$ \\
      $\spec{2}\,\spec{H} + \spec{M}$ & $\rightarrow$ & $\spec{H_2} + \spec{M}$ & $1.18\times 10^{19}$ &$-1.4$ & $0.7$ \\
      $\spec{O} + \spec{H} + \spec{M}$ & $\rightarrow$ & $\spec{OH} + \spec{M}$ & $4.71\times 10^{18}$ & $-1.0$ & $0.0$ \\
      $\spec{OH} + \spec{M}$ & $\rightarrow$ & $\spec{O} + \spec{H} + \spec{M}$ & $8.07\times 10^{18}$ & $-1.0$ & $428.2$\\
      $\spec{H} + \spec{OH} + \spec{M}$ & $\rightarrow$ & $\spec{H_2O} + \spec{M}$ & $3.80\times 10^{22}$ & $-2.0$ & $0.0$ \\
      $\spec{H_2O} + \spec{M}$ & $\rightarrow$ & $\spec{H} + \spec{OH} + \spec{M}$ &  $6.57\times 10^{23}$ & $-2.0$ & $499.4$\\
      \bottomrule
    \end{tabular}
  }
\end{table}

In Figure \ref{f:H2C6ICErw1} results for the computation of a one-dimensional
slow invariant manifold for the hydrogen combustion mechanism are shown.
Solutions of the optimization problem (\ref{eq:op}) have been computed using the
\emph{reverse mode}. Again the red
dot represents the chemical equilibrium and the progress variable
$x_{\mathrm{H}_2\mathrm{O}}$ has been fixed at different values between $0.0005$
and $0.0180$. The blue circles are the final values $x(t_\f)$ of the solution
trajectories of the optimization problem and blue curves are the trajectories
integrated numerically from those values forward to equilibrium.
They accurately approximate the SIM; convergence of trajectories (dashed red
curves) started from arbitrary initial values (red circles) to the computed
SIM is visualized in Figure \ref{f:H2C6ICErw1}.
\begin{figure}[htb]
  \centering
  \begin{center}
    \includegraphics[width=12cm]{./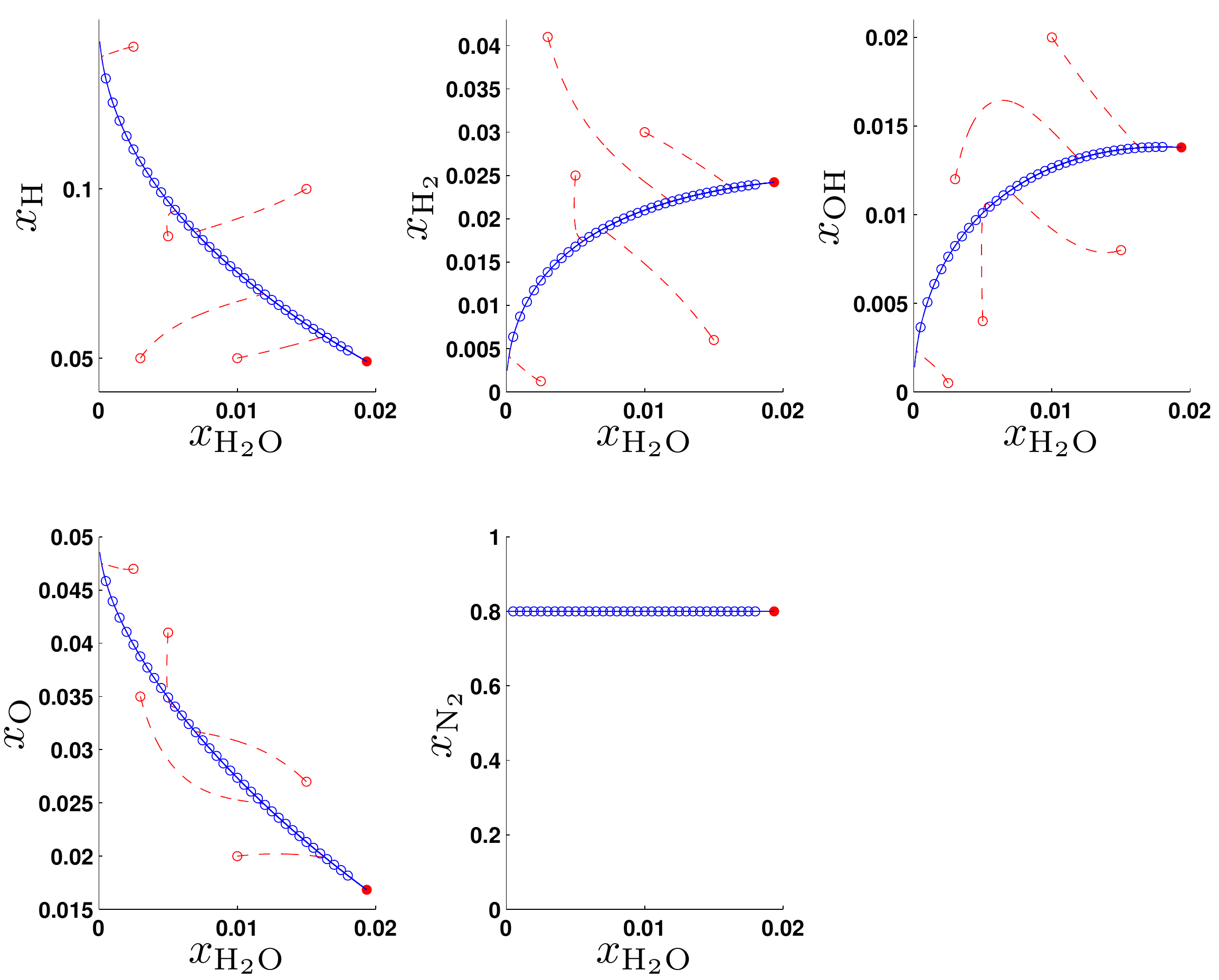}
  \end{center}
  \caption{\label{f:H2C6ICErw1}Results for a one-dimensional SIM of the simplified
    combustion mechanism computed with \emph{reverse mode},
    i.e.\
    $x_{\mathrm{H}_2\mathrm{O}}\left(t_{\f}\right)=x_{\mathrm{H}_2\mathrm{O}}^{t_{\f}}$. $t_0=-0.0004,
    t_\f= 0.0$, temperature $T=3000~{\rm K}$. Arbitrary
    trajectories relax on the manifold (red, dashed).}
\end{figure}

Figure \ref{f:ICEPIC2D} shows a two-dimensional manifold computed with the
{\em reverse mode}. Two
reaction progress variables $x_{\spec{H}_2\spec{O}}$ and $x_{\spec{H}_2}$ are fixed and the slow invariant manifold is approximated
on a two-dimensional grid as a solution of a family of optimization problems.
\begin{figure}[htb]
  \centering
  \begin{center}
    \includegraphics[width=12cm]{./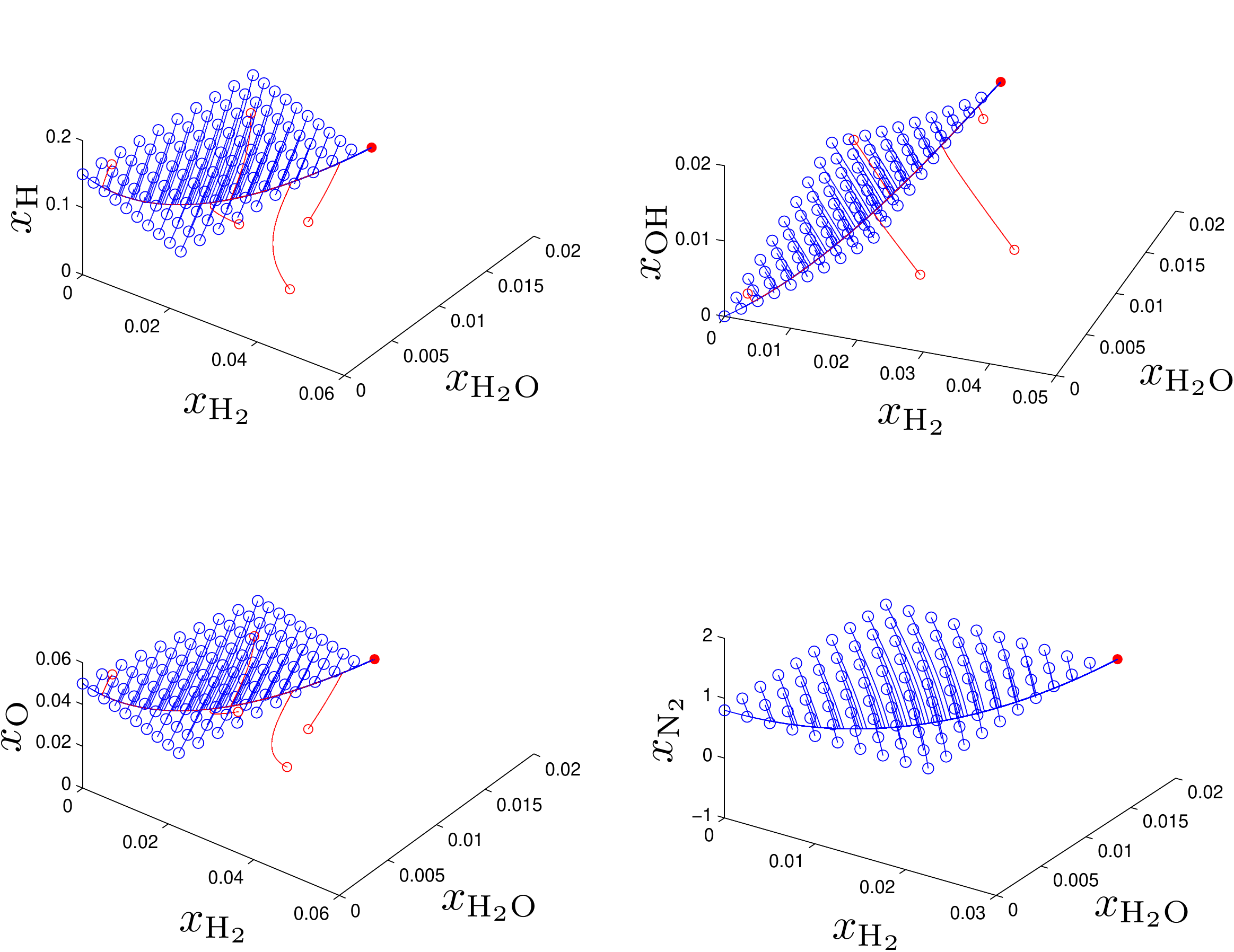}
  \end{center}
  \caption{\label{f:ICEPIC2D} Results for two-dimensional SIM of the simplified
    combustion mechanism computed with \emph{reverse mode} and
    $x_{\mathrm{H}_2\mathrm{O}}(t_{\f})$ and $x_{\mathrm{H}_2}(t_{\f})$ chosen
    as reaction progress variables,
    $t_0=-5.0\times10^{-7}, t_\f= 0.0$, constant temperature $T=3000~{\rm
      K}$.
    The same arbitrary trajectories as in Fig.~\ref{f:H2C6ICErw1} are shown in
    red.}
\end{figure}

\section*{Acknowledgments}
The authors thank Dr.~Mario~Mommer (IWR, Heidelberg) for interesting
discussions.

\clearpage

\end{document}